\def\HH{\mathcal H}
\def\G0{\Gamma_0(\mathcal G)}
\def\H0{\Gamma_0(\mathcal H)}
\def\vv{\boldsymbol{p}}
\definecolor{dred}{HTML}{D90404}
\definecolor{orng}{HTML}{D35400}
\newcommand\blfootnote[1]{%
  \begingroup
  \renewcommand\thefootnote{}\footnote{#1}%
  \addtocounter{footnote}{-1}%
  \endgroup
}
\newcommand{\NN}{\ensuremath{\mathbb N}}
\newcommand{\RR}{\ensuremath{\mathbb{R}}}
\newcommand{\menge}[2]{\big\{{#1}~\big |~{#2}\big\}}
\newcommand{\Argmin}{\ensuremath{{\text{\rm Argmin}}}}
\newcommand{\scal}[2]{{\langle{{#1}\mid{#2}}\rangle}}
\DeclareMathOperator{\proj}{Proj}
\DeclareMathOperator{\lmo}{LMO}
\theoremstyle{thmstyleone}%
\newtheorem{theorem}{Theorem}
\newtheorem{proposition}[theorem]{Proposition}%
\newtheorem{remark}{Remark}%
\newtheorem*{question*}{Question}
\newtheorem*{conjecture*}{Conjecture}
\newtheorem{corollary}{Corollary}
\newtheorem{notation}{Notation}%
\newtheorem{assumption}{Assumption}%
\theoremstyle{thmstyletwo}%
\theoremstyle{thmstylethree}%
\begin{document}

\title[High-precision linear minimization is no slower than projection]{High-precision linear minimization is no
slower than projection}

\author[1]{\fnm{Zev} \sur{Woodstock}}\email{woodstzc@jmu.edu}
\affil[1]{\orgdiv{Department of Mathematics and Statistics},
\orgname{James Madison University}, \orgaddress{\street{MSC 1911,
60 Bluestone Dr.}, \city{Harrisonburg},
\state{VA}, \postcode{22807}, \country{USA}}}
\abstract{
This note demonstrates that, for all compact convex sets,
high-precision linear minimization can be performed via
a single evaluation of the projection and a scalar-vector
multiplication. In consequence, if $\varepsilon$-approximate linear
minimization takes at least $L(\varepsilon)$ real vector-arithmetic
operations
and projection requires $P$ operations, then 
$\mathcal{O}(P)\geq \mathcal{O}(L(\varepsilon))$ is guaranteed. 
This concept is expounded with examples, an explicit error
bound, and an exact linear minimization result for polyhedral sets.}
\keywords{linear minimization, projection, Frank-Wolfe,
complexity}
\pacs[MSC Classification]{
90C30, 
03D15, 
47N10, 
52A07 
}
\maketitle
\section{Introduction}
\blfootnote{The work of Z. Woodstock was supported by the National
Science
Foundation under grant DMS-2532423.}
\begin{notation}
$\HH$ is a real Hilbert space with inner product
$\scal{\cdot}{\cdot}$ and induced norm $\|\cdot\|$. The
set
$C\subset\HH$ is nonempty, convex, and compact. The \emph{normal
cone} and
\emph{indicator function} of $C$ are denoted $N_C$ and $\iota_C$
respectively (see, e.g., \cite{Livre1}). The \emph{projection}
operator
onto $C$ is denoted
$\proj_C\colon\HH\to\HH\colon x\mapsto\Argmin_{c\in C}\|c-x\|$.
The set of \emph{linear minimization oracle} points is
$\lmo_C\colon\HH\to 2^\HH\colon x\mapsto\Argmin_{c\in
C}\scal{c}{x}$. For $\varepsilon\geq 0$, an
\emph{$\varepsilon$-approximate LMO of $x$} is a point $v\in C$ such
that $0\leq \scal{v}{x}-\min_{c\in C}\scal{c}{x}\leq \varepsilon$.
\end{notation}

\begin{assumption}
\label{as:1}
In-line with \cite{Nemi83}, this work assumes the ability to
perform real number arithmetic operations.
Suppose that, for all $x\in\HH$, projection and
$\varepsilon$-approximate linear minimization can be performed over
$C$ using finitely many real vector-arithmetic operations. Let $P$
and $L(\varepsilon)$ respectively denote the smallest amount of
operations required, i.e., the worst-case complexity for an
``optimal'' projection or LMO algorithm.
\footnote{$P$ and $L(\varepsilon)$ exist as limit points of 
monotonic sequences in $\NN$ bounded below.}
\end{assumption}

In constrained first-order optimization, a guiding motivator for the
development of
Frank-Wolfe algorithms is the fact that their hallmark subroutine,
the \emph{linear minimization oracle} (i.e., a selection of the
operator $\lmo_C$), is currently faster than projection
oracles on some sets arising in applications, particularly in
high-dimensional settings \cite{Dunn78,Comb21,Garb21} (also, if
projection is intractable, LMO is used to approximate it \cite{Grun18}). 
While several works suggest that for specific sets,
$P>L(0)$, this principle does not appear to have been definitively
established for all compact convex sets. In fact, regardless of
approximateness ($\varepsilon>0$) or exactness ($\varepsilon=0$),
it appears there are no results pertaining to all compact convex
sets that allow one to compare the computational complexity of
linear minimization to that of projection. 

The main contribution of this article is showing that, for any
$\varepsilon>0$, a high-precision $\varepsilon$-approximate LMO
can be obtained via the use of one projection and a (real)
scalar-vector multiplication, yielding the complexity bound
$P+1\geq L$ (hence, in finite-dimensional settings,
$\mathcal{O}(P)\geq \mathcal{O}(L(\varepsilon))$\footnote{For the
sake of presentation, (A)
we suppress the dependence of $P$ and $L$ on $C$ and the dimension
of $\HH$ and (B) we also only use big-$\mathcal{O}$ notation in
finite-dimensional settings; see \cite{Corm22}.}).
The error
bound in Theorem~\ref{t:1}
explicitly depends on the radius and boundedness of $C$.
It is further demonstrated in finite-dimensional settings that,
when $C$ is also polyhedral, projection is no faster than exact
linear minimization, i.e., the stronger inequality
$\mathcal{O}(P)\geq \mathcal{O}(L(0))$ holds.
The central approximation considered in this article comes from a
geometric concept (see Figure~\ref{fig:1}) that is known
(e.g., see \cite{Mort23}).
Nonetheless, as far as the author is aware, the present error bound
and complexity results appear to be new. 

\section{Relating linear minimization and projection}
We begin with some basic facts; see, e.g., \cite{Livre1} for
further background. Let $x$ and $z$ be points in a
real Hilbert space $\HH$. Then,
\begin{align}
\nonumber
v\in\lmo_C (z)=\underset{c\in C}{\Argmin}\scal{c}{z}
 &
 \Leftrightarrow 0\in
\partial \left(\scal{\cdot}{z}+\iota_C\right)(v)= z + N_C(v)
\Leftrightarrow
-z\in N_C(v)\\
\label{e:lmochar1}
&\Leftrightarrow
\begin{cases}
v\in C\\
\underset{c\in C}{\sup}\scal{-z}{c-v}\leq 0.
\end{cases}
\end{align}
Similarly, we have the following familiar identity
\begin{equation}
\label{e:projchar1}
p=\proj_C(x)\Leftrightarrow 
x-p\in N_C(p) \Leftrightarrow
\begin{cases}
p\in C\\
\underset{c\in C}{\sup} \scal{x-p}{c-p}\leq 0.
\end{cases}
\end{equation}

\begin{proposition}
\label{p:projlmo}
Let $C\subset\HH$ be a nonempty compact convex set. Then, for every
$x\in\HH$,
\begin{equation}
\label{e:1}
\proj_C(x)\in\lmo_C(\proj_C(x)-x).
\end{equation}
\end{proposition}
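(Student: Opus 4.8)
The plan is to reduce the claimed inclusion to a single normal-cone membership and then recognize that this membership is precisely the one characterizing the projection. Concretely, I would write $p := \proj_C(x)$ and $z := p - x$; the goal is to show $p \in \lmo_C(z)$. Applying the linear-minimization characterization \eqref{e:lmochar1} with the point $z$, this is equivalent to showing $p \in C$ together with $\sup_{c \in C}\scal{-z}{c - p} \leq 0$, i.e., in normal-cone language, $-z \in N_C(p)$.

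Next I would substitute $z = p - x$ and invoke the elementary identity $-z = -(p - x) = x - p$. Thus the condition $-z \in N_C(p)$ becomes $x - p \in N_C(p)$, which is exactly the characterization of $p = \proj_C(x)$ recorded in \eqref{e:projchar1}. Equivalently, at the level of the supremum conditions, $\sup_{c\in C}\scal{-z}{c-p} = \sup_{c\in C}\scal{x-p}{c-p} \leq 0$, where the inequality holds by \eqref{e:projchar1}. Since $p \in C$ by definition of the projection, both conditions in \eqref{e:lmochar1} are satisfied, giving $p \in \lmo_C(p - x)$ as desired.

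I expect there to be no substantive obstacle: the statement is an immediate consequence of the fact that the variational inequality $x - \proj_C(x) \in N_C(\proj_C(x))$ defining the projection is formally the same as the first-order optimality condition for minimizing the linear functional $\scal{\cdot}{\proj_C(x) - x}$ over $C$. The compactness and convexity hypotheses on $C$ enter only to guarantee that $\proj_C(x)$ is a well-defined point of $C$; once $p := \proj_C(x)$ is in hand, the argument is a one-line translation between \eqref{e:projchar1} and \eqref{e:lmochar1}. The only thing worth being careful about is the sign bookkeeping in passing from $\proj_C$'s inequality to $\lmo_C$'s, which is handled by the identity above.
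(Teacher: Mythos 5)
Your proof is correct and is essentially identical to the paper's: both set $z=\proj_C(x)-x$ in \eqref{e:lmochar1} and observe that the resulting normal-cone condition $-z=x-\proj_C(x)\in N_C(\proj_C(x))$ is exactly the projection characterization \eqref{e:projchar1}. No issues.
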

\begin{proof}
By setting $z=\proj_C(x) - x$ in \eqref{e:lmochar1}, we see from
\eqref{e:projchar1} that $\proj_C(x)$ is a solution of the
characterization in \eqref{e:lmochar1}.
\end{proof}

\begin{remark}
Since linear minimization is not unique in general, evaluating
$\lmo_C(\proj_C(x)-x)$ may not yield $\proj_C(x)$ for every
numerical implementation of $\lmo_C$ \cite{FWjl}. However, for a
particular selection (in the sense of \cite{Livre1}) of the
operator $\lmo_C$, \eqref{e:1} is
guaranteed to hold with equality instead of inclusion.
\end{remark}

For $\lambda>0$ sufficiently large, one can approximate an element
of $\lmo_C(x)$ with
\begin{equation}
\label{e:approx}
\proj_C(-\lambda x).
\end{equation}
\begin{figure}[th]
\centering
\includegraphics[scale=0.18]{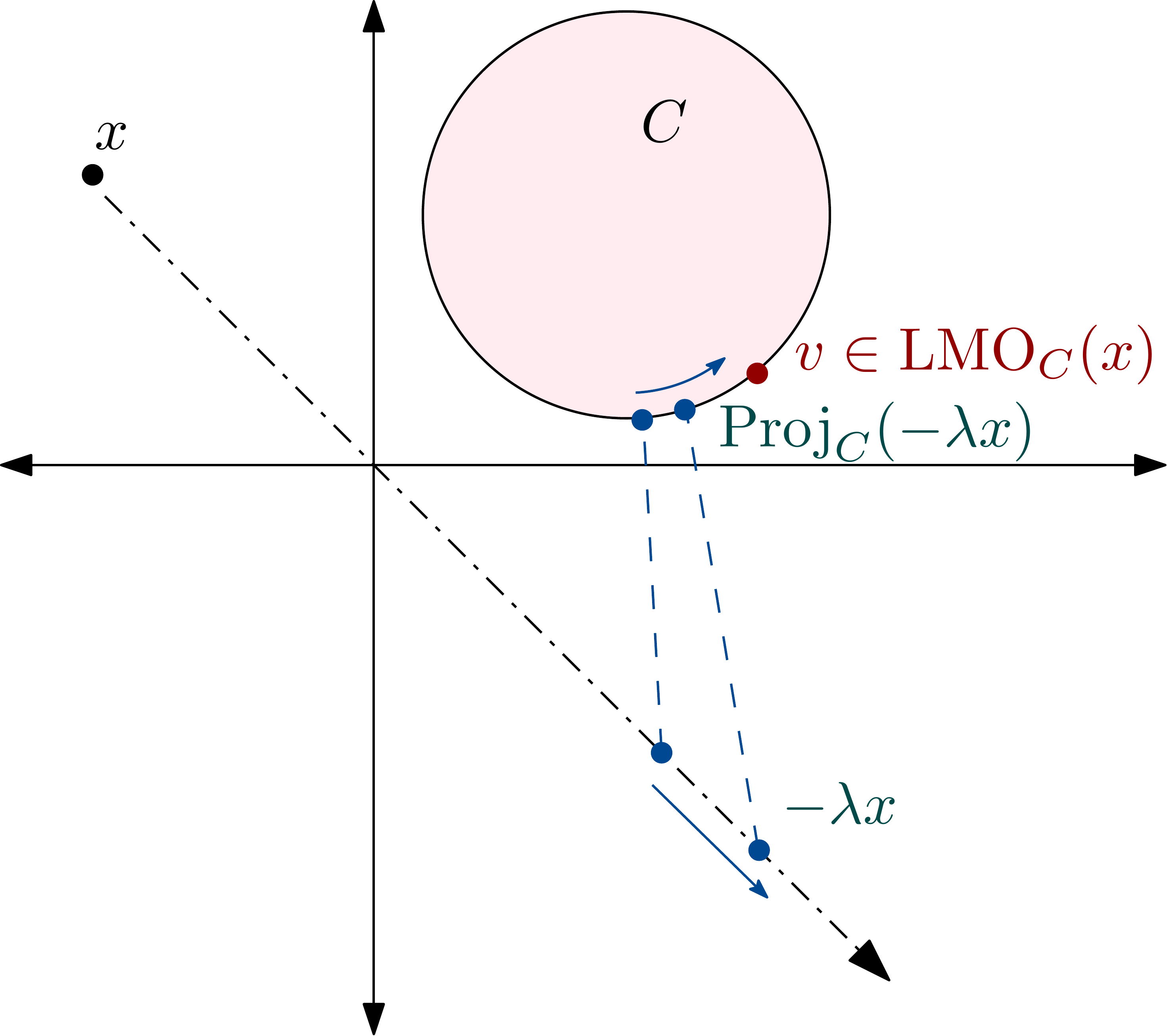}
\caption{As $\lambda$ gets larger, $\proj_C(-\lambda x)$ approaches
$\lmo_C(x)$ for a shifted $\ell_2$ ball.}
\label{fig:1}
\end{figure}

Selecting the parameter $\lambda$ can be guided as follows.
\begin{theorem}
\label{t:1}
Let $x\in\HH$ and let $C$ be a nonempty, compact, and convex subset
of $\HH$ with diameter $\delta_C\coloneqq\sup_{(c_1,c_2)\in
C^2}\|c_1-c_2\|\geq 0$ and bound $\mu_C\coloneqq\sup_{c\in
C}\|c\|\geq 0$. Then, for every $\lambda>0$ and
every $v\in\lmo_C(x)$, 
\begin{equation}
\label{e:422}
0\leq \scal{\proj_C(-\lambda x)}{x} 
-
\min_{c\in C}\scal{c}{x}
\leq
\frac{\|\proj_C(-\lambda x)\|}{\lambda}
\Big(\|v\|-
\|\proj_C(-\lambda x)\|
\Big).
\end{equation}
In consequence,
we have
$\|\proj_C(-\lambda x)\|\leq\|v\|$ and
 for every $\varepsilon>0$, 
\begin{equation}
\label{e:6}
\lambda\geq\frac{\min\left\{\delta_C\mu_C,\mu_C^2\right\}}{\varepsilon}\quad\Rightarrow\quad
0\leq \scal{\proj_C(-\lambda x)}{x} 
-
\underset{c\in C}{\min}\scal{c}{x}
\leq\varepsilon.
\end{equation}
\end{theorem}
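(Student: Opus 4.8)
The plan is to argue directly from the variational characterizations \eqref{e:lmochar1} and \eqref{e:projchar1}; no iteration or limiting argument is needed. Throughout, write $p\coloneqq\proj_C(-\lambda x)$. The left inequality in \eqref{e:422} is immediate, since $p\in C$ forces $\scal{p}{x}\geq\min_{c\in C}\scal{c}{x}$. For the right inequality, fix $v\in\lmo_C(x)\subset C$, so that $\scal{v}{x}=\min_{c\in C}\scal{c}{x}$, and apply \eqref{e:projchar1} with $-\lambda x$ in place of $x$ and the admissible point $c=v$: this gives $\scal{-\lambda x-p}{v-p}\leq 0$, equivalently $-\lambda\scal{x}{v-p}\leq\scal{p}{v-p}$. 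Dividing by $\lambda>0$ and using symmetry of the inner product, $\scal{p}{x}-\scal{v}{x}=-\scal{x}{v-p}\leq\tfrac1\lambda\scal{p}{v-p}=\tfrac1\lambda\big(\scal{p}{v}-\|p\|^2\big)$, and a single application of Cauchy--Schwarz, $\scal{p}{v}\leq\|p\|\,\|v\|$, yields exactly the right-hand side of \eqref{e:422}.

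The two consequences then follow quickly. Because the leftmost quantity in \eqref{e:422} is nonnegative, so is $\tfrac{\|p\|}{\lambda}\big(\|v\|-\|p\|\big)$; since $\lambda>0$, this forces $\|p\|\leq\|v\|$ (trivially if $\|p\|=0$, and otherwise by cancelling the positive factor $\|p\|$). For \eqref{e:6}, I would bound the right-hand side of \eqref{e:422} in two complementary ways, using that both $p$ and $v$ lie in $C$: the reverse triangle inequality gives $\|v\|-\|p\|\leq\|v-p\|\leq\delta_C$, while also $\|v\|-\|p\|\leq\|v\|\leq\mu_C$; pairing either estimate with $\|p\|\leq\mu_C$ shows $\tfrac{\|p\|}{\lambda}(\|v\|-\|p\|)\leq\tfrac1\lambda\min\{\delta_C\mu_C,\mu_C^2\}$, and the hypothesized lower bound on $\lambda$ makes the latter at most $\varepsilon$.

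I do not expect a substantive obstacle; the argument is essentially two lines of manipulation plus Cauchy--Schwarz. The only points demanding a little care are: (i) invoking \eqref{e:projchar1} at the shifted point $-\lambda x$ rather than $x$, so that the normal-cone inequality correctly picks up the factor $\lambda$ that is later divided out; and (ii) ordering the argument so that $\|p\|\leq\|v\|$ is established before \eqref{e:6}, which guarantees that in the two final estimates each quantity being bounded is genuinely nonnegative and hence that the products of upper bounds are valid upper bounds.
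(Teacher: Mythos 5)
Your proof is correct and follows essentially the same route as the paper: both derive the key inequality $\scal{p}{x}-\scal{v}{x}\leq\lambda^{-1}\big(\scal{p}{v}-\|p\|^2\big)$ from the variational characterization of the projection at $-\lambda x$ (the paper routes this through Proposition~\ref{p:projlmo}, which is the same inequality restated), then apply Cauchy--Schwarz and bound the result by $\lambda^{-1}\min\{\delta_C\mu_C,\mu_C^2\}$. Your two complementary estimates for \eqref{e:6} match the paper's (via $\|v\|-\|p\|\leq\|v-p\|\leq\delta_C$ and via dropping $\|p\|$ against $\mu_C$), so there is nothing to correct.
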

\begin{proof}
Let $v\in\lmo_C(x)$.
By Proposition~\ref{p:projlmo}, and the definition of the LMO,
\begin{equation}
\proj_C(-\lambda x)\in \underset{c\in C}{\Argmin}\;\scal{c}{\proj_C(-\lambda x)+\lambda x},
\end{equation}
so, for all $c\in C$,
$\scal{\proj_C(-\lambda x)}{\proj_C(-\lambda x)+\lambda x}\leq
\scal{c}{\proj_C(-\lambda x)+\lambda x}$.
In particular,
\begin{equation}
\scal{\proj_C(-\lambda x)}{\proj_C(-\lambda x)+\lambda x}
\leq
\scal{v }{\proj_C(-\lambda
x)+\lambda x}.
\end{equation}
Dividing by $\lambda$ and rearranging, then proceeding with 
standard norm inequalities yields
\begin{align}
\label{e:399}
\scal{\proj_C(-\lambda x)  }{x}
-
\scal{v }{x}
&
\leq
\lambda^{-1}\big(\scal{v}{\proj_C(-\lambda x)}-\|\proj_C(-\lambda
x)\|^2\big)\\
&
\leq
\lambda^{-1}\big(\|v\|\|\proj_C(-\lambda x)\|-\|\proj_C(-\lambda
x)\|^2\big)\\
&
\label{e:421}
=
\lambda^{-1}\|\proj_C(-\lambda x)\|\big(\|v\|-\|\proj_C(-\lambda
x)\|\big)\\
&\leq
\label{e:433}
\lambda^{-1}\|\proj_C(-\lambda x)\|\|v-\proj_C(-\lambda
x)\|.
\end{align}
Since $v$ is optimal, we can also lower bound \eqref{e:399} by $0$;
hence \eqref{e:422} follows from \eqref{e:421}.
An immediate consequence of \eqref{e:422} is that
$\|\proj_C(-\lambda x)\|\leq\|v\|$. Proceeding with
the bounds in \eqref{e:433}, we have
\begin{equation}
\label{e:423}
0\leq 
\scal{\proj_C(-\lambda x)}{x}
-
\scal{v}{x}
\leq
\lambda^{-1}\delta_C\mu_C.
\end{equation}
On the other hand, dropping the negative term in \eqref{e:421}
implies $0\leq \scal{\proj_C(-\lambda x)}{x} - \scal{v}{x}, \leq
\lambda^{-1}\mu_C^2$. Hence, in either case of
$\lambda\geq\varepsilon^{-1}\min\{\delta_C\mu_C,\mu_C^2\}$, \eqref{e:6} holds. In
particular,
$\proj_C(-\lambda x)$ is an $\varepsilon$-approximate LMO.
\end{proof}

\begin{corollary}[Projection is no faster than approximate LMO]
Let $\varepsilon>0$ and suppose that Assumption~\ref{as:1} holds.
Then $P+1\geq L(\varepsilon)$. In consequence, if $P\geq 1$ and
$\HH$ is finite-dimensional, then $\mathcal{O}(P)\geq
\mathcal{O}(L(\varepsilon))$.
\end{corollary}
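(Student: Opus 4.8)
The plan is to read the corollary off Theorem~\ref{t:1} as a statement about operation counts. Fix $\varepsilon>0$. First I would observe that the quantity $\lambda_\star\coloneqq\varepsilon^{-1}\min\{\delta_C\mu_C,\mu_C^2\}$ depends only on $C$ and $\varepsilon$, not on the query point $x$; hence it may be treated as known (computed once, in advance) and does not enter the per-query complexity. Then, by the implication \eqref{e:6} in Theorem~\ref{t:1}, for \emph{every} $x\in\HH$ the point $\proj_C(-\lambda_\star x)$ is an $\varepsilon$-approximate LMO of $x$.

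Next I would describe the algorithm this induces and count its operations. On input $x$: form the vector $-\lambda_\star x$ using a single (real) scalar--vector multiplication, then pass it to an optimal projection routine, which by Assumption~\ref{as:1} uses $P$ vector-arithmetic operations. The output is a legitimate $\varepsilon$-approximate LMO of $x$, and the total cost is $P+1$. Since $L(\varepsilon)$ is, by Assumption~\ref{as:1}, the \emph{smallest} number of operations over all correct $\varepsilon$-approximate LMO algorithms, minimality yields $L(\varepsilon)\leq P+1$.

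Finally, for the big-$\mathcal{O}$ consequence in finite dimensions, assume $P\geq 1$; then $P+1\leq 2P$, so $L(\varepsilon)\leq 2P$, i.e., $\mathcal{O}(L(\varepsilon))\leq\mathcal{O}(P)$ (recalling conventions (A) and (B) in the footnote, which suppress dependence on $C$ and the ambient dimension). I do not anticipate a genuine mathematical obstacle here: the only point requiring care is the operation accounting --- namely, justifying that $\lambda_\star$ is a fixed constant rather than something recomputed at each query, and that forming $-\lambda_\star x$ legitimately counts as one vector-arithmetic operation --- after which the corollary is immediate from Theorem~\ref{t:1}.
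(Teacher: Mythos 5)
Your proof is correct and follows essentially the same route as the paper: both deduce from Theorem~\ref{t:1} that $\proj_C(-\lambda x)$ with $\lambda=\varepsilon^{-1}\min\{\delta_C\mu_C,\mu_C^2\}$ is a valid $\varepsilon$-approximate LMO computable in $P+1$ operations, whence $L(\varepsilon)\leq P+1$ by minimality. Your additional care about treating $\lambda$ as a precomputed constant and about the $P+1\leq 2P$ step is a welcome elaboration of details the paper leaves implicit.
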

\begin{proof}
From Theorem~\ref{t:1}, $L(\varepsilon)$ is bounded above by the cost of
evaluating $\proj_C(-\lambda x)$, which is $P+1$.
\end{proof}

\begin{remark}
The type of bound provided by Theorem~\ref{t:1} is consistent with
many algorithms that allow for inexact
linear minimization, e.g., \cite{Dunn78,Jagg13,Freu16,Silv21}.
\end{remark}
In general, Theorem~\ref{t:1} requires $\lambda\to\infty$ to drive
the error bound to zero (as demonstrated, e.g., for a shifted
$\ell_2$ ball). However, for some sets (e.g., the 
$\ell_{\infty}$ ball), exact minimization is achieved for finite
$\lambda$, i.e.,
$\proj_C(-\lambda x)\in\lmo_C(x)$.\footnote{
Interactive graph demonstrations in
2-D for the shifted
$\ell_2$ ball (see also Fig.~\ref{fig:1}):
\url{https://www.desmos.com/calculator/ntpe1pncpu} and
 $\ell_{\infty}$ ball:
\url{https://www.desmos.com/calculator/qk3tnqskgw}.
}
As will be seen in Proposition~\ref{p:p},
exact linear minimization for finite $\lambda$
can be achieved more generally.
\begin{proposition}[Projection is no faster than exact LMO on
polyhedral sets]
\label{p:p}
Let $x\in\RR^n\eqqcolon\HH$ and suppose that
$C\subset\HH$
is compact, convex,
and polyhedral. Then
there exists a finite value $\lambda^*\geq 0$ such that
$\proj_C(-\lambda^* x)\in\lmo_C(x)$. Further, if
Assumption~\ref{as:1} holds, then $P+1\geq L(0)$; if $P\geq 1$,
then $\mathcal{O}(P)\geq \mathcal{O}(L(0))$.
\end{proposition}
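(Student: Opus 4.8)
The key claim to establish is that for a compact convex polyhedral $C\subset\RR^n$ there is a finite $\lambda^*\geq 0$ with $\proj_C(-\lambda^* x)\in\lmo_C(x)$; the complexity statement then follows exactly as in the Corollary, since evaluating $\proj_C(-\lambda^* x)$ costs $P+1$ operations and achieves an exact LMO. So the whole content is the first sentence.

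The plan is to combine Theorem~\ref{t:1} with the finite face structure of a polytope. First I would dispose of the trivial case: if $x=0$ then every point of $C$ is in $\lmo_C(x)$ and $\lambda^*=0$ works. Assume $x\neq 0$. Write $m^*\coloneqq\min_{c\in C}\scal{c}{x}$ and let $F\coloneqq\lmo_C(x)=\{c\in C\ :\ \scal{c}{x}=m^*\}$; this is a nonempty face of the polytope $C$, hence itself a polytope. The idea is that $\proj_C(-\lambda x)$ converges to a point of $F$ as $\lambda\to\infty$ (indeed Theorem~\ref{t:1} shows $\scal{\proj_C(-\lambda x)}{x}\to m^*$), and because $C$ has only finitely many faces, once $\lambda$ is large enough the projection must land exactly on $F$. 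Concretely: since $C$ is a polytope, there is a gap $\gamma>0$ between $m^*$ and the next-smallest value of $c\mapsto\scal{c}{x}$ attained on the finitely many vertices of $C$ not lying in $F$ — equivalently, $\min\{\scal{c}{x}-m^*\ :\ c\in C,\ c\notin F\}$ is not attained but $\scal{c}{x}-m^*$ is bounded below by such a $\gamma>0$ on the complement of any fixed open neighborhood of $F$; cleaner is to use that for $c\in C$, $\scal{c}{x}-m^* \geq \gamma\,\dist(c,F)/\delta_C$ or a similar Hoffman-type bound, but the vertex argument suffices. Then apply \eqref{e:6} with $\varepsilon\coloneqq\gamma$ (any value strictly below the gap): for $\lambda^*\geq\min\{\delta_C\mu_C,\mu_C^2\}/\gamma$ we get $0\leq\scal{\proj_C(-\lambda^* x)}{x}-m^*\leq\gamma<$ the value $\scal{c}{x}-m^*$ takes anywhere off $F$, forcing $\proj_C(-\lambda^* x)\in F=\lmo_C(x)$.

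The step requiring the most care is making precise the ``gap'' argument — i.e., that $\{\scal{c}{x}\ :\ c\in C\}\setminus\{m^*\}$ stays uniformly bounded away from $m^*$ on the set of $c$ achieving a given positive objective-value surplus, or more simply that the set $\{c\in C\ :\ \scal{c}{x}\leq m^*+\gamma\}$ equals $F$ for $\gamma$ small enough. This is where polyhedrality is essential and where an infinite-facet convex body would fail. I would argue it via vertices: a polytope $C=\operatorname{conv}\{y_1,\dots,y_N\}$ has $m^*=\min_i\scal{y_i}{x}$, and any $c\in C$ is a convex combination of the $y_i$; if $c\notin F$ then $c$ must use, with positive weight, some vertex $y_j$ with $\scal{y_j}{x}>m^*$, and I need to lower-bound $\scal{c}{x}-m^*$ in terms of that weight and then rule out the weight being arbitrarily small — this last point is the subtle one and is handled by noting that a face of $C$ is exactly the convex hull of a subset of the $y_i$, so "not in $F$" already means positive weight on a bad vertex, but to get a \emph{uniform} bound I instead set $\gamma\coloneqq\frac12\min\{\scal{y_i}{x}-m^*\ :\ \scal{y_i}{x}>m^*\}>0$ (with the convention that if no such $i$ exists then $F=C$ and $\lambda^*=0$) and observe that $\{c\in C\ :\ \scal{c}{x}< m^*+\gamma\}$ is a face of $C$ containing $F$ and no bad vertex, hence equals $F$. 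Plugging this $\gamma$ into \eqref{e:6} yields the desired finite $\lambda^*$, and the complexity conclusion is immediate.
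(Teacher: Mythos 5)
There is a genuine gap at the decisive step of your argument. You claim that for small $\gamma>0$ the sublevel set $S_\gamma\coloneqq\{c\in C:\scal{c}{x}<m^*+\gamma\}$ ``is a face of $C$ containing $F$ and no bad vertex, hence equals $F$.'' This is false: a sublevel set of a linear functional is not a face. Take $C=[0,1]^2$ and $x=(1,0)$, so $m^*=0$ and $F=\{0\}\times[0,1]$; then $S_\gamma=[0,\gamma)\times[0,1]$ is a slab strictly containing $F$ for every $\gamma>0$. More generally, whenever $F\neq C$, segments from $F$ to any point of $C\setminus F$ show that $\{\scal{c}{x}:c\in C\}$ fills the whole interval $[m^*,\max_{c\in C}\scal{c}{x}]$, so the ``gap'' you construct exists only among the \emph{vertices} outside $F$, not among the points of $C$. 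Consequently the bound $0\leq\scal{\proj_C(-\lambda x)}{x}-m^*\leq\gamma$ from \eqref{e:6} only places the projection inside the slab $S_\gamma$, i.e., it shows the projection gets \emph{close} to $F$; the Hoffman-type bound you mention fares no better, yielding only $\dist(\proj_C(-\lambda x),F)\leq\gamma^{-1}\delta_C\,\varepsilon$. Neither forces the projection to land exactly on $F$ at a finite $\lambda$, and that exactness is the entire content of the proposition (it is precisely what fails for non-polyhedral sets such as the shifted $\ell_2$ ball, where the objective value also converges to $m^*$ yet no finite $\lambda$ works).

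Closing the gap requires using polyhedrality on the projection side, not merely to produce a vertex gap in objective values. The paper does this by exact penalization: it formulates the minimal-norm element of $\lmo_C(x)$ as a quadratic program with the additional constraint $\scal{z}{x}\leq\nu$, verifies stability of the associated perturbation function, and invokes \cite[Theorem~3]{Geof71} to absorb that constraint into the objective with a finite multiplier $\lambda^*$; completing the square then identifies the penalized problem with $\proj_C(-\lambda^* x)$. An elementary alternative you could graft onto your setup: $\lambda\mapsto\proj_C(-\lambda x)$ is piecewise affine (projection onto a polyhedron is a piecewise affine map of its argument) and bounded because $C$ is compact, hence eventually constant in $\lambda$; by \eqref{e:6} its eventual value $v$ satisfies $\scal{v}{x}=m^*$, so $v\in F$. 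As written, however, your proof does not establish the claim.
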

\begin{proof}
Without loss of generality $C=\menge{x\in\HH}{Ax\leq b}$ where
$A\in\mathbb{R}^{m\times n}$ and $b\in\mathbb{R}^m$.
Let $\nu=\min_{c\in C}\scal{c}{x}$ and consider the problem of
computing $\proj_{\lmo_C(x)}(\boldsymbol{0})$, i.e., the minimal-norm
element of $\lmo_C(x)$:
\begin{equation}
\label{e:plmo}
\underset{\substack{z\in\HH\\Az\leq b\\ \scal{z}{x}\leq
\nu}}{\textrm{minimize}}\;\;
{\frac{1}{2}\|z\|^2}.
\end{equation}
By \cite[Theorem~11.15]{Gule10}, strong duality holds and hence the
perturbation function
$\vv\colon\mathbb{R}^{m+1}\to\left[-\infty,+\infty\right]\colon
y\mapsto\inf_{z\in\HH}\{f(x)\,|\,Az-b\leq 
(y_i)_{i=1}^m;\;\;\scal{z}{x}-\nu\leq
y_{m+1}\}$ is \emph{stable} in the sense of \cite{Geof71}, i.e.,
$\vv(\boldsymbol{0})$ is finite and there exists $M>0$ such that
\cite[pp.~8]{Geof71}
\begin{equation}
\label{e:019}
(\forall \xi>0)\quad \frac{\vv(\boldsymbol{0})-\vv(\xi,\ldots,\xi)}{\xi}\leq M.
\end{equation}
With an eye towards using \cite[Theorem~3]{Geof71}, we will use
\eqref{e:019} to show that
the partially-dualized perturbation function
$p\colon\mathbb{R}\to\left[-\infty,+\infty\right]\colon
\xi\mapsto\inf_{z\in\HH; Az-b\leq 0}\{f(x)\,|\,\scal{x}{z}-\nu\leq
\xi\}$ is also stable: this follows from the fact that
$p(0)=\vv(\boldsymbol{0})$ and, for all $\xi>0$, $p(\xi)\geq
\vv(\xi,\ldots,\xi)$, so
\begin{equation}
\frac{p(0)-p(\xi)}{\xi}=\frac{\vv(\boldsymbol{0})-p(\xi)}{\xi}\leq
\frac{\vv(\boldsymbol{0})-\vv(\xi,\ldots,\xi))}{\xi}\leq M,
\end{equation}
as claimed.
By \cite[Theorem~3]{Geof71}, there exists $\lambda^*\geq 0$
such that
\begin{align}
\proj_{\lmo_C(x)}(\boldsymbol{0})
&=
\underset{\substack{z\in\HH\\Az\leq b}}{\Argmin}\,
\frac{1}{2}\|z\|^2+\lambda^*(\scal{x}{z}-\nu)\\
\label{e:19}
&=\underset{\substack{z\in\HH\\Az\leq b}}{\Argmin}\,
\frac{1}{2}\|-\lambda^* x-z\|^2=\proj_C(-\lambda^* x),
\end{align}
where \eqref{e:19} makes use of the fact that the minimization is
not changed by addition of the constant $\lambda^*\nu+\|\lambda^*x\|^2/2$.
This establishes that, for finite $\lambda^*\geq 0$,
$\proj_C(-\lambda^* x)\in\lmo_C(x)$. In consequence, under
Assumption~\ref{as:1}, the computation requied to
perform $\proj_C(-\lambda^* x)$ (namely $P+1$) is an upper
bound on $L(0)$, which completes the proof.
\end{proof}

\section{Conclusion}

\begin{remark}
In practice, using $\proj_{C}(-\lambda x)$ to approximate an LMO 
point is not advisable since certain
constants ($\delta_C$ and $\mu_C$ for Theorem~\ref{t:1}; $\lambda^*$
for Proposition~\ref{p:p}) may be unknown, and faster methods may
be available
\cite{Dunn78,Comb21,Garb21}.
The main utility of these results are to establish a relationship
between the optimal complexities $P$ and $L(\varepsilon)$ -- not to
provide an efficient algorithm for linear minimization.
\end{remark}

While this note demonstrates two connections between the
complexities of projection and linear minimization, the
general relationship between $P$ and $L(0)$ warrants further
investigation. 

For some sets (e.g., singletons), $P=L(0)$. Hence, even though
current evidence suggests that
$P>L(0)$ on a myriad of important sets
\cite{Comb21}, strict inequality cannot hold in general.
Nonetheless, it remains an open question as to whether or not there exists {\em
any}
compact convex set such that its projection operator has a faster
optimal runtime complexity than exact linear minimization.

\begin{question*}
Does there exist a nonempty compact convex set such that
$P<L(0)$?
\end{question*}

\begin{conjecture*}
If the dimension of $\HH$ is finite, then $L(0)$ is $\mathcal{O}(P)$.
\end{conjecture*}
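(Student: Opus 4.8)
The plan is to realize $\lambda^*$ as a Lagrange multiplier for the quadratic program that computes the minimal-norm point of $\lmo_C(x)$, and then to recognize the penalized problem as a projection by completing the square. Write $C=\menge{z\in\HH}{Az\leq b}$ with $A\in\RR^{m\times n}$, $b\in\RR^m$, and put $\nu\coloneqq\min_{c\in C}\scal{c}{x}$, which is finite and attained since $C$ is compact. Then $\lmo_C(x)=\menge{z\in C}{\scal{z}{x}\leq\nu}$ is a nonempty polyhedron (the extra constraint $\scal{z}{x}\le\nu$ is automatically an equality), so its minimal-norm element $\bar z\coloneqq\proj_{\lmo_C(x)}(\boldsymbol{0})$ is the unique minimizer of $z\mapsto\tfrac12\|z\|^2$ over the polyhedron $\menge{z\in\HH}{Az\leq b,\ \scal{z}{x}\leq\nu}$.

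Next I would dualize \emph{only} the single scalar constraint $\scal{z}{x}\leq\nu$, keeping $Az\leq b$ as a hard constraint, and argue that there is a finite multiplier $\lambda^*\geq 0$ with $\bar z\in\Argmin_{Az\leq b}\big(\tfrac12\|z\|^2+\lambda^*(\scal{z}{x}-\nu)\big)$. The mechanism is exact penalization: for a convex quadratic objective over a polyhedron the optimal-value (perturbation) function associated with the right-hand sides is piecewise linear--quadratic, hence has a finite one-sided directional derivative at the origin, i.e.\ it is \emph{stable} in the sense of \cite{Geof71}. Stability of the fully perturbed problem follows from a strong-duality result for convex quadratic programs over polyhedra (see, e.g., \cite{Gule10}), and it transfers to the partially dualized perturbation function via the elementary inequality between the two value functions. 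Geoffrion's exact-multiplier theorem \cite{Geof71} then furnishes the finite $\lambda^*\geq 0$.

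Finally, completing the square gives $\tfrac12\|z\|^2+\lambda^*\scal{z}{x}=\tfrac12\|z-(-\lambda^* x)\|^2-\tfrac{(\lambda^*)^2}{2}\|x\|^2$, so after discarding the additive constants $-\tfrac{(\lambda^*)^2}{2}\|x\|^2$ and $-\lambda^*\nu$ the $\Argmin$ over $\menge{z\in\HH}{Az\leq b}=C$ is exactly $\proj_C(-\lambda^* x)$. Hence $\proj_C(-\lambda^* x)=\bar z\in\lmo_C(x)$, which proves the first claim. The complexity consequences then follow as in the Corollary: $\proj_C(-\lambda^* x)$ is computed by one projection ($P$ operations) and one scalar--vector multiplication, so $L(0)\leq P+1$, and if $P\geq 1$ in finite dimensions this yields $\mathcal{O}(P)\geq\mathcal{O}(L(0))$.

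The step I expect to be the main obstacle is the exact-penalization claim -- specifically, guaranteeing that the optimal dual multiplier is \emph{attained} at a finite value rather than merely approached in the limit. This is exactly where polyhedrality cannot be dropped: for a general compact convex $C$ (e.g.\ a shifted $\ell_2$-ball) no finite $\lambda$ makes $\proj_C(-\lambda x)$ land in $\lmo_C(x)$, so the argument must genuinely exploit the piecewise-polyhedral structure of the value function. Pinning down the cleanest off-the-shelf polyhedral strong-duality statement and carefully relating the fully dualized and partially dualized perturbation functions is the technical heart of the proof.
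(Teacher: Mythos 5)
The statement you were asked to prove is the paper's closing \emph{conjecture}: that $L(0)$ is $\mathcal{O}(P)$ for \emph{every} nonempty compact convex set $C$ in a finite-dimensional $\HH$. The paper does not prove this statement; it explicitly leaves it open. Your argument begins by writing $C=\menge{z\in\HH}{Az\leq b}$, i.e., it assumes $C$ is polyhedral, and that assumption is not part of the conjecture --- it is precisely the hypothesis that makes your exact-penalization step work. What you have written is, nearly step for step, the paper's proof of Proposition~\ref{p:p} (projection is no faster than exact LMO on \emph{polyhedral} sets): form the quadratic program for the minimal-norm point of $\lmo_C(x)$, dualize only the scalar constraint $\scal{z}{x}\leq\nu$, invoke strong duality and stability in Geoffrion's sense to obtain a finite multiplier $\lambda^*\geq 0$, and complete the square to identify the penalized problem with $\proj_C(-\lambda^* x)$. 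As a proof of that proposition your argument is correct and follows the same route as the paper.

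As a proof of the conjecture, however, it has a fatal gap, and you name it yourself in your final paragraph: for non-polyhedral $C$ (e.g., a shifted $\ell_2$ ball) no finite $\lambda$ places $\proj_C(-\lambda x)$ in $\lmo_C(x)$, so the exact-multiplier mechanism is simply unavailable. The paper's Theorem~\ref{t:1} gives only an $\varepsilon$-approximate LMO for each fixed $\lambda$, with the required $\lambda$ growing like $\varepsilon^{-1}$; this yields $\mathcal{O}(P)\geq\mathcal{O}(L(\varepsilon))$ for every $\varepsilon>0$ but says nothing about $L(0)$ for general compact convex sets. Bridging that gap --- either by a genuinely different construction of an exact LMO from finitely many projections, or by exhibiting a compact convex set with $P<L(0)$ --- is exactly the open problem the paper poses, which is why the statement is labeled a conjecture rather than a theorem.
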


\subsubsection*{Acknowledgements}
The work of Z. Woodstock was supported by the National
Science
Foundation under grant DMS-2532423.

Many thanks to Mathieu Besan\c con, who first suggested trying a
duality approach for the proof of Proposition~\ref{p:p}. Thanks to
two anonymous referees, Jannis Halbey, Sebastian Pokutta, Elias
Wirth, and the anonymous user `gerw' (on Math Stack Exchange) for
early correspondences on this topic.

\subsubsection*{Data availability statement}
No data was used in the creation of this article. Citations are
available above.

\subsubsection*{Conflict of interest statement}
The author declares no conflict of interest.


\begin{thebibliography}{1}
{\scriptsize
\bibitem{FWjl}
M. Besan\c con, M. Carderera, and S. Pokutta,
FrankWolfe. jl: A high-performance and flexible toolbox for
Frank--Wolfe algorithms and conditional gradients,
{\em INFORMS J. Comput.},
vol.~34 (5),
pp. 2611--2620,
2022.


\bibitem{Livre1}
H. H. Bauschke and P. L. Combettes, 
{\em Convex Analysis and Monotone Operator Theory in Hilbert 
Spaces}, 2nd ed. 
Springer, New York, 2017.

\bibitem{Comb21}
C. W. Combettes and S. Pokutta,
Complexity of linear minimization and projection
on some sets,
{\em Oper. Res. Lett.},
vol. 49 (4),
pp. 565--571,
2021.

\bibitem{Corm22}
T. H. Cormen, C. E. Leiserson, R. L. Rivest, and
C. Stein,
{\em Introduction to Algorithms}, 4th ed.
MIT Press, Cambridge, 2022.

\bibitem{Dunn78}
J. Dunn and S. Harshbarger,
Conditional gradient algorithms with open loop step size rules.
{\em J. Math.  Anal. Appl.}, 
vol. 62, pp. 432--444, 1978.

\bibitem{Freu16}
R. M. Freund and P. Grigas,
New analysis and results for the Frank-Wolfe method,
{\em Math. Program., Ser. A},
vol. 155, pp. 199--230, 2016.


\bibitem{Garb21}
D. Garber, A. Kaplan, and S. Sabach,
Improved complexities of conditional gradient-type methods with
applications to robust matrix recovery problems,
{\em Math. Program.},
vol. 186, pp. 185--208, 2021.

\bibitem{Geof71}
A. M. Geoffrion,
Duality in nonlinear programming: A simplified
applications-oriented development,
{\em SIAM Rev.},
vol. 13 (1) pp. 1--37, 1971.

\bibitem{Grun18}
S. Gr\" unewalder,
Compact Convex Projections,
{\em J. Mach. Learn. Res.},
vol. 18 (219), pp. 1--43, 2018.


\bibitem{Gule10}
O. G\"uler
{\em Foundations of Optimization},
Springer, New York, 2010.

\bibitem{Jagg13}
M. Jaggi,
Revisiting Frank-Wolfe: Projection-free sparse convex optimization.
{\em Proc. 30th International Conference on Machine
Learning}, in PMLR,
vol. 28 (1), pp. 427--435, 2013.

\bibitem{Mort23}
H. Mortagy, S. Gupta, and S. Pokutta,
Walking in the shadow: A new perspective on
descent directions for constrained minimization,
{\em Proc. 34th Conference on Neural Information Processing
Systems}, 
vol. 33, pp. 12873--12883, 2020.


\bibitem{Nemi83}
A. S. Nemirovskij and D. B. Yudin,
{\em Problem complexity and method efficiency in optimization},
John Wiley \& Sons, 1983.

\bibitem{Silv21}
A. Silveti-Falls, C. Molinari and J. Fadili,
Inexact and stochastic generalized conditional gradient with
augmented Lagrangian and proximal step,
{\em J. Nonsmooth Anal. Optim.},
vol.~2, 2021.
}
\end{thebibliography}
\end{document}